\newcommand{\R}{{\mathbb R}}
\newcommand{\Ss}{{\mathbb S}}
\newcommand{\p}{\partial}
\newcommand{\ra}{\rightarrow}
\newcommand\norm[1]{\left\Arrowvert {#1}\right\Arrowvert}
\theoremstyle{plain}
\newtheorem{theorem}{Theorem}[section]
\newtheorem{corollary}[theorem]{Corollary}
\newtheorem{lemma}[theorem]{Lemma}
\theoremstyle{definition}
\theoremstyle{remark}
\numberwithin{equation}{section}
\title[Isolated singularity for semilinear elliptic equations]{Exact behavior around isolated singularity for semilinear elliptic equations with a log-type nonlinearity}
\author{Marius Ghergu}
\address{School of Mathematics and Statistics, University College Dublin, Belfield, Dublin 4, Ireland}
\email{marius.ghergu@ucd.ie}
\author{Sunghan Kim}
\address{Department of Mathematical Sciences, Seoul National University, Seoul 08826, Korea}
\email{sunghan290@snu.ac.kr}
\author{Henrik Shahgholian}
\address{Department of Mathematics, Royal Institute of Technology, 100 44 Stockholm, Sweden}
\email{henriksh@kth.se}
\subjclass[2010]{Primary 35J61; Secondary 35J75, 35B40, 35C20. }
\begin{document}

\maketitle

\begin{abstract}
We study the semilinear elliptic equation 
\begin{equation*}
-\Delta u=u^\alpha |\log u|^\beta\quad\text{in }B_1\setminus\{0\},
\end{equation*}
where $B_1\subset\R^n$ with $n\geq 3$, $\frac{n}{n-2} < \alpha < \frac{n+2}{n-2}$ and $-\infty<\beta<\infty$. Our main result establishes that nonnegative solution $u\in C^2(B_1\setminus\{0\})$ of the above equation either has a removable singularity at the origin or behaves like 
\begin{equation*}
u(x) = A(1+o(1)) |x|^{-\frac{2}{\alpha-1}} \left(\log \frac{1}{|x|}\right)^{-\frac{\beta}{\alpha-1}}\quad\text{as } x\ra 0,
\end{equation*}
with 
\begin{equation*}
A=\left[\left(\frac{2}{\alpha-1}\right)^{1-\beta}\left(n-2-\frac{2}{\alpha-1}\right)\right]^{\frac{1}{\alpha-1}}.
\end{equation*}

\smallskip

\noindent\keywords{{\bf Keywords:} Singular solutions; Asymptotic behavior; Log-type nonlinearity}
\end{abstract}


\section{Introduction}\label{section:intro}

Let $n\geq 3$ and $B_1$ be the unit open ball in $\R^n$. This paper is concerned with the behavior of nonnegative solutions of
\begin{equation}\label{eq:main}
-\Delta u = u^\alpha |\log u|^\beta\quad \text{in }B_1\setminus\{0\},
\end{equation}
where  $\alpha$ and $\beta$ are real numbers satisfying
\begin{equation}\label{eq:alpha-m}
\frac{n}{n-2} < \alpha < \frac{n+2}{n-2}\quad\text{and}\quad - \infty < \beta < \infty. 
\end{equation}
We say that $u$ is a nonnegative solution of \eqref{eq:main} if $u\in C^2(B_1\setminus\{0\})$ is nonnegative and satisfies \eqref{eq:main} pointwise. In addition, we say that a nonnegative solution $u$ of \eqref{eq:main} is {\it singular} if $u$ is unbounded in any punctured ball $B_r\setminus\{0\}$ with $0<r<1$. 
 
The case $\beta=0$ in \eqref{eq:main} is by now well understood; in their pioneering work \cite{GS1981}, Gidas and Spruck established a series of results that completely characterize the asymptotic behavior of local solutions of \eqref{eq:main} (with $\beta=0$). The main goal of this paper is to obtain similar results for \eqref{eq:main} when the exponents $\alpha$ and $\beta$ are in the range given by \eqref{eq:alpha-m}. 

Our main result is the following.
\begin{theorem}\label{th1}
Assume  $\alpha$ and $\beta$ satisfy \eqref{eq:alpha-m} and let $u$ be a nonnegative solution of \eqref{eq:main}. Then the following alternative holds:

\begin{enumerate}
\item[(i)] either $u$ has a removable singularity at the origin,

\item[(ii)] or $u$ is a singular solution and satisfies
\begin{equation}\label{main}
u(x)=(A+o(1)) |x|^{-\frac{2}{\alpha-1}}\left(\log \frac{1}{|x|}\right)^{-\frac{\beta}{\alpha-1}}\quad\mbox{as }x\to 0,
\end{equation}
where
\begin{equation}\label{const}
A=\left[\left(\frac{2}{\alpha-1}\right)^{1-\beta}\left(n-2-\frac{2}{\alpha-1}\right)\right]^{\frac{1}{\alpha-1}}.
\end{equation}
\end{enumerate}
\end{theorem}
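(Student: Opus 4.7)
The approach follows the Gidas--Spruck framework \cite{GS1981} for the pure power case, with additional bookkeeping to absorb the logarithmic factor. I would proceed in three stages.

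\emph{Step 1: Dichotomy and two-sided a priori bounds.} If $u$ stays bounded near $0$, then $u^\alpha|\log u|^\beta$ is bounded near $0$, and standard removability (using $n\ge 3$) yields alternative (i). If instead $u$ is singular, the goal is
$$c\,|x|^{-\frac{2}{\alpha-1}}\bigl(\log\tfrac{1}{|x|}\bigr)^{-\frac{\beta}{\alpha-1}}\le u(x)\le C\,|x|^{-\frac{2}{\alpha-1}}\bigl(\log\tfrac{1}{|x|}\bigr)^{-\frac{\beta}{\alpha-1}}$$
for small $|x|$. The upper bound I would get by a Pol\'a\v{c}ik--Quittner--Souplet doubling argument combined with the Gidas--Spruck Liouville theorem for $-\Delta v=v^\alpha$ on $\R^n$ (applicable since $\alpha<(n+2)/(n-2)$); the $\log$ factor produces only lower-order corrections under rescaling, because $\log u(r\theta)\sim \tfrac{2}{\alpha-1}\log(1/r)$ along singular solutions. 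The lower bound I would derive by first using superharmonicity of $u$ to get $u(x)\gtrsim |x|^{-(n-2)}$ for a non-removable singularity and then iteratively bootstrapping via the equation, making essential use of $\alpha>n/(n-2)$.

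\emph{Step 2: Emden--Fowler rescaling.} With the a priori bounds in hand, set $t=\log(1/|x|)$ and
$$w(t,\theta)=|x|^{\frac{2}{\alpha-1}}t^{\frac{\beta}{\alpha-1}}u(x),\qquad \theta=x/|x|,$$
so that $c\le w\le C$ uniformly in $(t,\theta)$. A direct computation converts \eqref{eq:main} into an equation on the cylinder $(T,\infty)\times S^{n-1}$ of the schematic form
$$w_{tt}+b_1 w_t+\Delta_{S^{n-1}}w-\lambda w+\Bigl(\tfrac{2}{\alpha-1}\Bigr)^{\!\beta} w^\alpha=\mathcal E(t,\theta,w),$$
where $b_1$ is a fixed real constant, $\lambda=\tfrac{2}{\alpha-1}\bigl(n-2-\tfrac{2}{\alpha-1}\bigr)>0$ under \eqref{eq:alpha-m}, and $\mathcal E$ collects error terms of order $O(1/t)$ and $O((\log t)/t)$ arising from differentiating the $\log$ factor and from expanding $|\log u|^\beta$. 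Interior elliptic estimates then upgrade the $L^\infty$ bound to uniform $C^{2,\gamma}$ control of $w$.

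\emph{Step 3: Classification of limits and convergence.} For every sequence $t_k\to\infty$, $w(\cdot+t_k,\cdot)$ subconverges in $C^{2,\gamma}_{\mathrm{loc}}(\R\times S^{n-1})$ to a positive bounded entire solution $w_\infty$ of the autonomous limit equation (with $\mathcal E\equiv 0$). The task is to show that every such $w_\infty$ equals the constant $A$ defined in \eqref{const}: this is precisely the cylindrical analog of the Gidas--Spruck classification of singular solutions of $-\Delta v=v^\alpha$. Under \eqref{eq:alpha-m}, $\lambda>0$ rules out $w_\infty\equiv 0$, while a Pohozaev-type identity together with the subcritical bound $\alpha<(n+2)/(n-2)$ excludes non-constant radial orbits and non-axisymmetric solutions. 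Uniqueness of the subsequential limit then promotes to $w(t,\theta)\to A$ uniformly on $S^{n-1}$ as $t\to\infty$, which is \eqref{main}.

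\emph{Main obstacle.} The most delicate point is Step 3 in the presence of the non-autonomous perturbation $\mathcal E$: the Lyapunov functional natural to the autonomous cylindrical equation must be shown to remain almost monotone along trajectories of the perturbed equation, and the $t$-integrability of $\mathcal E$ must be exploited to preclude slow oscillations of $w$ around $A$. The sharp constant $A$ appears only after this classification is complete, so any slack in the two-sided bounds of Step~1 has to be squeezed out precisely here.
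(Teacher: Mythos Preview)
Your compactness--classification programme is a reasonable alternative to the paper's route, but Step~1 contains a genuine gap that undermines Step~3. You propose to obtain the lower bound by ``using superharmonicity of $u$ to get $u(x)\gtrsim |x|^{-(n-2)}$ for a non-removable singularity.'' This starting point is unavailable here: since $g(u)=u^\alpha|\log u|^\beta\in L^1(B_1)$ (a consequence of \cite[Lemma~2.1]{CGS1989}), the Brezis--Lions decomposition forces the Newtonian coefficient at the origin to vanish whenever $\alpha>\frac{n}{n-2}$; equivalently, $u(x)\gtrsim |x|^{-(n-2)}$ would give $u^\alpha\gtrsim |x|^{-\alpha(n-2)}$ with $\alpha(n-2)>n$, contradicting integrability. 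So there is no seed estimate to bootstrap from, and without the lower bound your claim in Step~3 that ``$\lambda>0$ rules out $w_\infty\equiv 0$'' has no force: $w_\infty\equiv 0$ is a perfectly good limit of nonnegative bounded solutions of the autonomous cylinder equation.

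The paper avoids this problem entirely by \emph{not} proving an a~priori lower bound. Instead it invokes the asymptotic radial symmetry $u(x)=(1+O(|x|))\bar u(|x|)$ from \cite{CGS1989}, which collapses the analysis to the ODE satisfied by the spherical average $\bar\psi(t)=e^{-\frac{2}{\alpha-1}t}t^{\frac{\beta}{\alpha-1}}\bar u(e^{-t})$. Energy estimates (multiply by $\bar\psi'$, then by $\bar\psi''$) show $\bar\psi'\to 0$ and $\bar\psi''\to 0$, forcing every subsequential limit to solve $b_0\bar\psi_0=\bigl(\tfrac{2}{\alpha-1}\bigr)^\beta\bar\psi_0^\alpha$, hence $\bar\psi_0\in\{0,A\}$. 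The case $\bar\psi\to 0$ is \emph{not} excluded a~priori; rather, it is analysed a~posteriori: the linearised ODE gives the decay rate $\bar\psi(t)=O(e^{-\frac{2}{\alpha-1}t})$, which translates back to $\bar u(r)=O\bigl((\log\tfrac1r)^{-\frac{\beta}{\alpha-1}}\bigr)$ and hence $u^\alpha|\log u|^\beta\in L^p$ for all $p$, yielding removability. Thus in the paper's logic the lower bound is a \emph{conclusion} of the dichotomy, not a hypothesis for it.

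If you want to rescue your approach, drop the lower bound claim and allow $w_\infty\equiv 0$ as a possible limit in Step~3; you then need a separate argument (of the type just described) showing that $w(t,\cdot)\to 0$ implies removability. Note also that your limit equation carries the first-order term $a_0 w_t$ with $a_0=\tfrac{4}{\alpha-1}-(n-2)>0$, and it is this damping---not a Pohozaev identity---that forces bounded entire solutions to be constant.
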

For $\beta=0$ we recover the result in \cite[Theorem 1.3]{GS1981}. 
Let us note that in the case $\beta=0$, the approach in \cite{GS1981} relies to a large extend on the properties of the scaling function $u_\lambda(x)=\lambda^{\frac{2}{\alpha-1}}u(\lambda x)$ ($\lambda>0$). Thus, if $u$ is a solution of \eqref{eq:main} (with $\beta=0$) then, so is $u_\lambda$. 
A similar scaling is not available to us in case $\beta\neq 0$ due to the presence of the logaritmic term in \eqref{eq:main}. In turn, we shall take advantage of the result in \cite[Theorem 1.1]{CGS1989} which allows us to derive that singular solutions of \eqref{eq:main} are asymptotically radial. The exact asymptotic behavior \eqref{main} is further deduced by looking at the corresponding ODE of the scaled function $|x|^{\frac{2}{\alpha-1}}(\log \frac{1}{|x|})^{\frac{\beta}{\alpha-1}}u(x)$ in polar coordinates.

Asymptotic behavior of nonnegative singular solutions has been studied in various settings. In addition to the classical results \cite{GS1981} and \cite{CGS1989}, Korevaar et al. \cite{KMPS1999} derived the improved asymptotic behavior of the nonnegative singular solutions of $-\Delta u = u^{\frac{n+2}{n-2}}$ by a more geometric approach. Meanwhile, C. Li \cite{L1996} extended the result on the asymptotic radial symmetry of singular solutions of $-\Delta u = g(u)$ for a more general $g(u)$ considered in \cite{CGS1989}. Recently, the asymptotic radial symmetry has been achieved for other operators, such as conformally invariant fully nonlinear equations \cite{HLT2010, L2006}, fractional equations \cite{CJSX2014}, and fractional $p$-laplacian equations \cite{CL2017}. 

This paper extends the classical argument in \cite{GS1981} and \cite{CGS1989} to a log-type nonlinearity. One of the key observations is that from the asymptotic radial symmetry achieved in \cite{CGS1989} for nonnegative solutions of $-\Delta u = g(u)$ one can obtain an optimal asymptotic upper bound for $\frac{g(u)}{u}$. Hence, we are left with preserving the optimality by transforming $\frac{g(u)}{u}$ to $u$ under a suitable inverse mapping. 

This observation indeed allows us to consider a more general class of equations of type 
$$
-\Delta u = u^\alpha f(u)\quad\mbox{in }B_1\setminus\{0\},
$$ 
where $f$ is a slowly varying function at infinity under some additional assumptions. A typical example 
$$
f(u)=|\log^{(k_1)} u|^{\beta_1}|\log^{(k_2)} u|^{\beta_2}\cdots |\log^{(k_m)} u|^{\beta_m},
$$
where $k_i$'s are positive integer, $\beta_i$'s are real numbers, $\log^{(k)} u = \log (\log^{(k-1)}u)$ for $k\geq 2$ with $\log^{(1)}u= \log u$. However, we shall not specify the additional assumptions for the nonlinearity $f$ as they turn out to involve technical and cumbersome computations. Hence, we present the argument only with $f(u) = |\log u|^\beta$ in order to simplify the presentation. 

Throughout the paper, we shall write $f(x) = O(g(x))$ if $|f(x)|\leq C|g(x)|$ uniformly in $x$, where $C>0$ depends at most on $n$, $\alpha$ and $\beta$. We shall also use the notation $f(x) = o(g(x))$ as $x\ra 0$ to denote that $\frac{|f(x)|}{|g(x)|} \ra 0$ as $x\ra 0$.  


\section{Asymptotic Behavior around a Non-Removable Singularity}\label{section:nremv}

Let $\bar{u}(r)$  denote the spherical average of $u$ on the ball of radius $r$, that is,
\begin{equation}\label{eq:ub}
\bar{u}(r) = \fint_{\p B_r} u\,d\sigma. 
\end{equation}
The following result is a slight modification of Theorem 1.1 in \cite{CGS1989}.
\begin{theorem}\label{theorem:cgs} Let $u$ be a nonnegative solution of 
\begin{equation}\label{eq:cgs-pde}
-\Delta u = g(u) \quad\text{in } B_1\setminus\{0\},
\end{equation}
with an isolated singularity at the origin. Suppose that $g(t)$ is a locally Lipschitz function which in a neighborhood of infinity satisfies the conditions below: 
\begin{enumerate}[(i)]
\item $g(t)$ is nondecreasing in $t$;  
\item $t^{-\frac{n+2}{n-2}}g(t)$ is nonincreasing;
\item $g(t)\geq ct^p$ for some $p\geq\frac{n}{n-2}$ and $c>0$. 
\end{enumerate}
Then 
\begin{equation}\label{eq:cgs}
u(x) = (1+ O(|x|)) \bar{u}(|x|)\quad\text{as } x\ra 0.
\end{equation}
\end{theorem}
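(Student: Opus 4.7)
My plan is to follow the Caffarelli--Gidas--Spruck strategy \cite{CGS1989} essentially verbatim, checking that hypotheses (i)--(iii) provide exactly the ingredients their argument needs. The proof splits into three blocks: (a) an a priori singular upper bound on $u$; (b) a moving plane scheme in Kelvin coordinates yielding asymptotic radial symmetry; and (c) a quantitative sharpening to the $O(|x|)$ rate.

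For block (a), hypothesis (iii) combined with a standard Serrin-type/Harnack argument near the singularity produces an upper bound of the form $u(x) \leq C |x|^{-2/(p-1)}$, which in particular gives $u(x) = o(|x|^{2-n})$ since $p \geq n/(n-2)$. This decay is precisely what is needed so that the Kelvin transform
\[
v(y) := |y|^{2-n} u(y/|y|^2), \qquad |y|>1,
\]
is controlled at infinity and amenable to moving planes. A direct computation shows that $v$ satisfies $-\Delta v(y) = |y|^{-(n+2)} g(|y|^{n-2} v(y))$. Rewriting the right-hand side as $v^{(n+2)/(n-2)} \tilde g(|y|^{n-2} v(y))$ with $\tilde g(s) := s^{-(n+2)/(n-2)} g(s)$, hypothesis (ii) says $\tilde g$ is nonincreasing: this is exactly the monotonicity in $|y|$ required so that, upon reflection across a hyperplane $\{y_1 = \lambda\}$ with $\lambda > 0$, the right-hand side at the reflected (closer-to-origin) point dominates the one at $y$. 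Hypothesis (i), nondecreasing $g$, is then used for the linearization step in the maximum principle on the difference $v(y) - v(y^\lambda)$.

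For block (b), running the moving plane argument of \cite{CGS1989} by sliding hyperplanes from infinity toward the origin yields that $v(y) - v(y^\lambda) \to 0$ uniformly along any axis, which is precisely the statement that $v$ is asymptotically radial at infinity, equivalently $u(x) = (1+o(1)) \bar u(|x|)$. For block (c), I would use a scaling-invariant gradient estimate $|\D u(x)| \leq C|x|^{-1} u(x)$, obtained from interior $C^1$ estimates applied to the rescaled functions $u_r(z) := u(rz)/\bar u(r)$ on the annulus $\tfrac{1}{2} < |z| < 2$, where the uniform $L^\infty$ bound on $u_r$ follows from block (a) together with the Harnack inequality on spheres; integrating this bound over $\p B_{|x|}$ gives $|u(x) - \bar u(|x|)| \leq C |x| \bar u(|x|)$. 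The main obstacle is the moving plane step, where one has to initiate the procedure using the decay of $v$ at infinity and rule out interior contact points as the hyperplane slides inward; however, since (i)--(iii) reproduce the structural hypotheses used in \cite{CGS1989}, the standard machinery carries over with only cosmetic changes.
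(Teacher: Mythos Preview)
The paper does not actually supply a proof of this theorem: it simply quotes \cite[Theorem~1.1]{CGS1989} and remarks that condition~(i) need only hold near infinity. Your blocks (a) and (b) are a fair summary of the Caffarelli--Gidas--Spruck machinery and line up with what the paper is implicitly invoking.

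Block (c), however, contains a real gap. The scaling-invariant gradient bound $|\nabla u(x)|\le C|x|^{-1}u(x)$ is correct, but integrating it along $\partial B_{|x|}$ only yields
\[
|u(x)-\bar u(|x|)|\;\le\;\sup_{\partial B_{|x|}}|\nabla u|\cdot\diam(\partial B_{|x|})
\;\le\;C|x|^{-1}\bar u(|x|)\cdot |x|\;=\;C\,\bar u(|x|),
\]
which is nothing more than the Harnack inequality on spheres; the claimed extra factor $|x|$ is not there. A bare gradient estimate cannot produce the $O(|x|)$ rate, because it does not distinguish a genuinely nonradial solution on an annulus from an asymptotically radial one.

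In \cite{CGS1989} the $O(|x|)$ rate is not obtained by a separate gradient step but comes out of the moving-plane geometry itself. Working in the original coordinates, one reflects across hyperplanes $T_\lambda=\{x\cdot e=\lambda\}$ and slides $\lambda$ down; since the equation is posed only in $B_1\setminus\{0\}$, the procedure can be started only for $\lambda$ bounded below by a fixed multiple of $r$ when comparing points on $\partial B_r$ (otherwise the reflected cap leaves the domain). The resulting inequalities compare $u(x)$ with $u$ at a reflected point whose distance to the origin differs from $|x|$ by $O(|x|)$, and combining this with the radial monotonicity of $\bar u$ and the Harnack inequality is what produces $u(x)=(1+O(|x|))\bar u(|x|)$. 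If you want to keep your outline, replace block~(c) by this ``measured'' moving-plane comparison rather than a gradient integration.
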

The original result in \cite[Theorem 1.1]{CGS1989} requires condition (i) above to be satisfied for all $t>0$ but a careful analysis of its proof shows that  this condition is enough to hold in a neighborhood of infinity.

It is not hard to see that $g(t) = t^\alpha |\log t|^\beta$ fulfills the conditions (i) -- (iii) in Theorem \ref{theorem:cgs} above. Moreover, it follows from \cite[Lemma 2.1]{CGS1989} that $u^\alpha|\log u|^\beta \in L^1(B_1)$ and $u$ is a distribution solution of \eqref{eq:main} in $B_1$: for any $\eta \in C_c^\infty(B_1)$, we have 
\begin{equation}\label{eq:main-dist}
-\int_{B_1} u\Delta \eta\,dx = \int_{B_1} u^\alpha|\log u|^\beta \eta\,dx.
\end{equation}

The next lemma provides an asymptotic upper bound for $\bar{u}$.

\begin{lemma}\label{lemma:upper} We have
\begin{equation}\label{eq:ub-upper}
\bar{u}(r) = O\left( r^{-\frac{2}{\alpha-1}} \left( \log \frac{1}{r} \right)^{-\frac{\beta}{\alpha-1}} \right),
\end{equation}
and
\begin{equation}\label{eq:ub'-upper}
\bar{u}'(r) = O\left( r^{-\frac{\alpha+1}{\alpha-1}} \left( \log \frac{1}{r} \right)^{-\frac{\beta}{\alpha-1}} \right),
\end{equation}
as $r \ra 0$. 
\end{lemma}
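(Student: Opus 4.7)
The plan is to follow the strategy outlined in the introduction: use Theorem~\ref{theorem:cgs} and the techniques of \cite{CGS1989} to establish the asymptotic upper bound
\begin{equation*}
\bar u(r)^{\alpha-1}|\log \bar u(r)|^\beta \le \frac{C}{r^2}\qquad\text{as } r\to 0
\end{equation*}
on the ratio $g(\bar u)/\bar u$, then invert this algebraically to obtain \eqref{eq:ub-upper}. The derivative bound \eqref{eq:ub'-upper} then follows by integrating the radial form of \eqref{eq:main}.

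For the bound on the ratio, spherical averaging of \eqref{eq:main} combined with Theorem~\ref{theorem:cgs} yields the approximately radial equation $-\bar u''(r) - \frac{n-1}{r}\bar u'(r) = (1+o(1))\bar u(r)^\alpha|\log\bar u(r)|^\beta$. Introducing the Emden--Fowler change of variable $s = \log(1/r)$, $W(s) = \log\bar u(e^{-s})$, a direct computation gives the Riccati-type identity
\begin{equation*}
r^2\,\frac{g(\bar u(r))}{\bar u(r)} = (n-2)W'(s) - W''(s) - (W'(s))^2 + o(1),
\end{equation*}
and a phase-plane argument (exploiting non-negativity of the left-hand side together with the coupling to $\bar u$ through $g$) yields a uniform bound on $W'(s)$ and hence on $r^2 g(\bar u)/\bar u$. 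The inversion is then purely algebraic: setting $L(r) = \log\bar u(r)$, the bound reads $(\alpha-1)L + \beta\log L \le 2\log(1/r) + O(1)$, and a short two-step bootstrap (first one extracts $L \le C\log(1/r)$, hence $\log L = \log\log(1/r) + O(1)$; then substitute back) yields
\begin{equation*}
L(r) \le \mu\log(1/r) - \nu\log\log(1/r) + O(1), \qquad \mu = \frac{2}{\alpha-1},\quad \nu = \frac{\beta}{\alpha-1},
\end{equation*}
which exponentiates to \eqref{eq:ub-upper}.

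For \eqref{eq:ub'-upper}, since $u$ carries no fundamental-solution singularity at the origin (excluded by the combination of $\alpha > n/(n-2)$ and $u^\alpha|\log u|^\beta\in L^1(B_1)$, so that $-\Delta u = u^\alpha|\log u|^\beta$ holds distributionally on all of $B_1$), integrating the radial equation from $0$ to $r$ gives
\begin{equation*}
-r^{n-1}\bar u'(r) = \int_0^r s^{n-1}\,\overline{u^\alpha|\log u|^\beta}(s)\,ds.
\end{equation*}
Inserting \eqref{eq:ub-upper} through Theorem~\ref{theorem:cgs}, the integrand is controlled by $Cs^{n-1-\mu\alpha}(\log 1/s)^{-\nu}$; the hypothesis $\alpha > n/(n-2)$ is exactly $n - \mu\alpha = n - 2 - \mu > 0$, so an elementary asymptotic evaluation gives $\int_0^r s^{n-1-\mu\alpha}(\log 1/s)^{-\nu}\,ds = O(r^{n-\mu\alpha}(\log 1/r)^{-\nu})$, and dividing by $r^{n-1}$ together with the identity $\mu\alpha = \mu+2$ yields \eqref{eq:ub'-upper}. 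The main obstacle lies in the Riccati/phase-plane step above: for $\beta = 0$ the reduction is a classical autonomous ODE analysis, but the log factor produces a genuinely non-autonomous forcing term whose asymptotics must be coupled back to $W'$ carefully so that the resulting bound on $r^2 g(\bar u)/\bar u$ is tight enough for the subsequent algebraic inversion to produce the correct powers of $\log(1/r)$ in \eqref{eq:ub-upper}.
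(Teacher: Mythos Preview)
Your overall architecture matches the paper's: first establish the key intermediate bound
\[
\bar u(r)^{\alpha-1}\bigl(\log\bar u(r)\bigr)^\beta = O(r^{-2}),
\]
then invert it (the paper does this via the Lambert $W$ function, which is equivalent to your two-step bootstrap), and finally deduce \eqref{eq:ub'-upper} from the integral representation of $\bar u'$. Your treatment of the inversion and of \eqref{eq:ub'-upper} is correct and essentially the same as the paper's.

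The substantive difference, and the gap, is in how you reach the intermediate bound. The paper does \emph{not} pass to a second-order Riccati equation. Instead it uses a first-order argument: from
\[
-\bar u'(r) = \frac{c}{r^{n-1}}\int_{B_r} u^\alpha|\log u|^\beta\,dx
\]
together with \eqref{eq:cgs} and the monotonicity of $\bar u$ one obtains directly $-\bar u'(r)\ge c\,r\,\bar u(r)^\alpha(\log\bar u(r))^\beta$. Separating variables and integrating (using $\bar u(r)\to\infty$) gives
\[
\int_{\bar u(r)}^\infty \frac{ds}{s^\alpha(\log s)^\beta}\ \ge\ c\,r^2,
\]
and the left side is asymptotic to $\tfrac{1}{\alpha-1}\,\bar u(r)^{1-\alpha}(\log\bar u(r))^{-\beta}$. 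This is elementary and sidesteps any phase-plane analysis.

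Your Riccati route has a genuine hole at precisely the point you flag as the ``main obstacle''. From
\[
r^2\,\frac{g(\bar u)}{\bar u} = (n-2)W' - (W')^2 - W'' + o(1),
\]
non-negativity of the left side yields only $W'' \le (n-2)W' - (W')^2$, which bounds $W'$ by roughly $n-2$. But a bound on $W'$ alone does \emph{not} bound the right-hand side, since $W''$ is uncontrolled from below; and $W'\le n-2$ is not sharp enough to bound $r^2\bar u^{\alpha-1}(\log\bar u)^\beta$ directly, because $n-2 > \tfrac{2}{\alpha-1}$ throughout the range \eqref{eq:alpha-m}. Closing this loop would require either an honest second-order energy argument (in the Emden--Fowler variable $r^{2/(\alpha-1)}\bar u$, not $\log\bar u$) or the ``coupling'' you allude to but do not supply. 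The paper's first-order separation-of-variables step bypasses all of this.
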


\begin{proof} Throughout this proof, $c>0$ depends at most on $n$, $\alpha$ and $\beta$, and may differ from one line to another. As mentioned earlier, we have $u^\alpha|\log u|^\beta \in L^1(B_1)$, and thus from the divergence theorem and \eqref{eq:main} we deduce that
\begin{equation}\label{eq:ub'}
-\bar{u}'(r) = \frac{c}{r^{n-1}}\int_{B_r} u^\alpha |\log u|^\beta\,dx.
\end{equation}
In particular, $\bar{u}(r)$ is monotone decreasing in $r$. Moreover, if \eqref{eq:ub-upper} holds, then one may easily derive \eqref{eq:ub'-upper} from \eqref{eq:ub'} and \eqref{eq:cgs}. 

Henceforth, we shall prove \eqref{eq:ub-upper}. Especially, we shall assume that $\bar{u}(r) \neq O(1)$ as $r\ra 0$, since the case $\bar{u}(r) = O(1)$ already satisfies \eqref{eq:ub-upper}. Under this assumption, we have $\bar{u}(r_k) \ra \infty$ for some $r_k\ra 0$. Then the monotonicity of $\bar{u}$ implies that $\bar{u}(r) \ra \infty$ as $r\ra 0$. 

Taking $r$ small enough, and using \eqref{eq:cgs} and the fact that $s\mapsto s^\alpha (\log s)^\beta$ is increasing for large $s$, we deduce that 
\begin{equation*}
-\bar{u}'(r) \geq cr \bar{u}^\alpha(r) (\log \bar{u}(r))^\beta. 
\end{equation*}
Hence, it follows from the assumption $\bar{u}(r) \ra \infty$ as $r\ra 0$ and the fact $\bar{u}'(r) < 0$ that  
\begin{equation*}
\int_{\bar{u}(r)}^\infty \frac{ds}{s^\alpha (\log s)^\beta} =  - \int_0^r \frac{\bar{u}'(r) dr}{\bar{u}^\alpha(r)(\log \bar{u}(r))^\beta} \geq  cr^2.
\end{equation*}
Note that for any sufficiently large $s$ satisfying $2|\beta|\leq (\alpha-1)\log s$, we have 
\begin{equation*}
-\frac{1}{\alpha-1}\frac{d}{ds} \left( \frac{1}{s^{\alpha-1}(\log s)^\beta}\right) = \left(1 - \frac{\beta}{(\alpha-1)\log s}\right)\frac{1}{s^\alpha(\log s)^\beta} \geq \frac{1}{2s^\alpha(\log s)^\beta},
\end{equation*}
whence we may proceed from the integral above as 
\begin{equation*}
\frac{1}{\bar{u}^{\alpha-1}(r) (\log\bar{u}(r))^\beta} \geq cr^2,
\end{equation*}
for sufficiently small $r>0$. Thus, we arrive at 
\begin{equation}\label{eq:upper2}
\bar{u}^{\alpha-1}(r)(\log \bar{u}(r))^\beta = O ( r^{-2} ) \quad\text{as } r\ra 0. 
\end{equation}

Setting $w(s)$ by the inverse function\footnote{$w$ is known as the Lambert $W$-function.} of $se^s$, we know that $\frac{s}{w(s)}$ is the inverse function of $s\log s$. Since $t^{\alpha-1}(\log t)^\beta = (c s\log s)^\beta$ with $s = t^{\frac{\alpha-1}{\beta}}$, we deduce from \eqref{eq:upper2} and the choice of $w$ that
\begin{equation}\label{eq:upper4}
\bar{u}(r) = O\left(  r^{-\frac{2}{\alpha-1}} w\left( r^{-\frac{2}{\beta}} \right)^{-\frac{\beta}{\alpha-1}} \right)\quad\text{as }r\ra 0.
\end{equation}
However, since $\log s - \log\log s \leq w(s) \leq \log s$, for sufficiently large $s$, we arrive at \eqref{eq:ub-upper}. 
\end{proof}

Let us next define
\begin{equation}\label{eq:psi}
\psi(t,\theta) = r^{\frac{2}{\alpha-1}} \left( \log \frac{1}{r} \right)^{\frac{\beta}{\alpha-1}} u(r,\theta),
\end{equation}
with $t=-\log r$ and $\theta\in\Ss^{n-1}$. 

\begin{lemma}\label{lemma:psi-pde} We have  
\begin{equation}\label{eq:psi-pde}
\psi_{tt} + \Delta_\theta \psi + a\psi_t - b\psi + \zeta^\beta\psi^\alpha = 0,
\end{equation}
for large $t>1$ and $\theta\in\Ss^{n-1}$, where 
\begin{align}
\label{eq:a}
a(t) &= \frac{4}{\alpha-1} - n + 2 - \frac{2\beta}{(\alpha-1)t},\\
\label{eq:b}
b(t) &= \left((n-2) - \frac{2}{\alpha-1} + \frac{\beta}{(\alpha-1)t}\right)\left(\frac{2}{\alpha-1} - \frac{\beta}{(\alpha-1)t}\right) - \frac{\beta}{(\alpha-1)t^2},
\end{align}
and 
\begin{equation}\label{eq:zeta}
\zeta (t,\theta) = \frac{2}{\alpha-1} - \frac{\beta}{\alpha-1}\frac{\log t}{t} + \frac{\log \psi(t,\theta)}{t}.
\end{equation}
\end{lemma}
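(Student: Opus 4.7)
The plan is a direct change of variables. In polar coordinates,
\[
\Delta u = u_{rr} + \frac{n-1}{r}u_r + \frac{1}{r^2}\Delta_\theta u,
\]
and the substitution $t = -\log r$, for which $r\partial_r = -\partial_t$, turns this by a one‑line chain‑rule calculation into
\[
r^2\Delta u = U_{tt} - (n-2)U_t + \Delta_\theta U, \qquad U(t,\theta) := u(e^{-t},\theta).
\]
So up to multiplication by $r^2$, $-\Delta$ becomes the constant‑coefficient operator $-(\partial_{tt}-(n-2)\partial_t+\Delta_\theta)$, which is the classical Emden–Fowler trick.

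Next, set $p := \tfrac{2}{\alpha-1}$ and $q := \tfrac{\beta}{\alpha-1}$, so that by \eqref{eq:psi} we have $U = e^{pt}t^{-q}\psi$. Two applications of the product rule give
\[
e^{-pt}t^{q}\,U_t = \psi_t + (p - q/t)\psi,
\]
\[
e^{-pt}t^{q}\,U_{tt} = \psi_{tt} + 2(p - q/t)\psi_t + \bigl[(p-q/t)^2 + q/t^2\bigr]\psi.
\]
Multiplying \eqref{eq:main} by $r^2 e^{-pt}t^{q}$ and collecting terms, the left‑hand side becomes
\[
\psi_{tt}+\Delta_\theta\psi+\bigl(2(p-q/t)-(n-2)\bigr)\psi_t+\bigl[(p-q/t)^2-(n-2)(p-q/t)+q/t^2\bigr]\psi.
\]
Direct comparison identifies the $\psi_t$‑coefficient with $a(t)$ in \eqref{eq:a}, and the bracket with $-b(t)$, since by inspection $b(t) = (n-2)(p-q/t)-(p-q/t)^2-q/t^2$ as written in \eqref{eq:b}.

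For the nonlinear term the decisive algebraic cancellation is $2 - p\alpha = -p$, which is equivalent to $p(\alpha-1) = 2$ and is precisely why $p = 2/(\alpha-1)$ is the "right" exponent. Combined with $q(\alpha-1) = \beta$ it gives
\[
r^2 u^{\alpha}|\log u|^{\beta}\cdot e^{-pt}t^{q} = r^{2-p\alpha}t^{q-q\alpha}\psi^{\alpha}|\log u|^{\beta} = t^{-\beta}\psi^{\alpha}|\log u|^{\beta}.
\]
Since $\log u = pt - q\log t + \log\psi$, dividing by $t$ gives exactly $\zeta(t,\theta)$ as defined in \eqref{eq:zeta}; and because the leading term $p$ is strictly positive, $\zeta > 0$ for all sufficiently large $t$, so $t^{-\beta}|\log u|^{\beta} = \zeta^{\beta}$. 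Substituting into $-\Delta u = u^{\alpha}|\log u|^{\beta}$ yields \eqref{eq:psi-pde}. The whole argument is a mechanical coordinate change; the only "obstacle" is bookkeeping the many derivative terms, and the only genuinely interesting step is the identity $2 - p\alpha = -p$ that produces the cancellation of the $r$‑factor.
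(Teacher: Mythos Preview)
Your proof is correct and follows essentially the same change-of-variables route as the paper; the paper organizes the computation by setting $\phi(r)=r^{2/(\alpha-1)}(\log\tfrac{1}{r})^{\beta/(\alpha-1)}$ and $\eta(r)=r\phi'/\phi$ and expanding $\psi_{tt}+\Delta_\theta\psi$ directly, whereas you factor it into the Emden--Fowler step $u\mapsto U$ followed by the multiplicative step $U=e^{pt}t^{-q}\psi$, but the content is identical. One small bookkeeping slip: in your displayed chain for the nonlinear term the intermediate expression should carry $r^{2+p-p\alpha}$ (i.e.\ $r^0$) rather than $r^{2-p\alpha}$, since the factor $e^{-pt}=r^{p}$ must also be absorbed; your stated cancellation $2-p\alpha=-p$ then correctly makes this equal to $1$, so the conclusion is unaffected.
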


\begin{proof} Take $r_0>0$ small enough such that $\log u > 0$ in $B_{r_0}$, and let us set $t_0 = - \log r_0$. In what follows, we take $t\geq t_0$ and $0<r\leq r_0$, unless stated otherwise. For the notational convenience, let us write 
\begin{equation*}
\phi(r) = r^{\frac{2}{\alpha-1}} \left( \log \frac{1}{r} \right)^{\frac{\beta}{\alpha-1}},
\end{equation*}
so that $\psi(t,\theta) = \phi(r)u(r,\theta)$. Since $\p_t = - r\p_r$ and $\p_{tt} = r\p_r + r^2 \p_{rr}$, we have  
\begin{equation}\label{eq:psi-pde-1}
\psi_{tt} + \Delta_\theta \psi = r^2 \phi \Delta u + (2r\phi' - (n-2)  \phi) r u_r  +  (r \phi' + r^2 \phi'') u,
\end{equation}
where the left and the right side are evaluated in $(t,\theta)$ and, respectively, in $(r,\theta)$, and by $\phi'$ and $\phi''$ we denoted $\frac{d\phi}{dr}$ and, respectively, $\frac{d^2\phi}{dr^2}$. Setting 
\begin{equation}\label{eq:eta}
\eta(r) = \frac{2}{\alpha-1} + \frac{\beta}{(\alpha-1)\log r},
\end{equation}
we observe that $r\phi' = \eta \phi$ and $r^2 \phi' = (\eta^2- \eta + r\eta')\phi$, and therefore,
\begin{equation}\label{eq:psi-pde-2}
\begin{split}
\psi_{tt} + \Delta_\theta \psi &= - r^2 \phi u^\alpha(\log u)^\beta + (2\eta - n + 2) r u_r \phi + (\eta^2 + r\eta') \phi u \\
& = - r^2 \phi u^\alpha (\log u)^\beta - (2\eta - n + 2)\psi_t + ( (n-2)\eta - \eta^2 + r\eta')\psi,
\end{split}
\end{equation}
where we used $\psi_t = -r\phi' u - r\phi u_r = -\eta\psi - r\phi u_r$ and $\psi = \phi u$ in deriving the second identity. 

In view of \eqref{eq:a} and \eqref{eq:b}, it is not hard to check that 
\begin{equation}\label{eq:a-re}
a(t) = 2\eta(r) - n + 2,
\end{equation}
and 
\begin{equation}\label{eq:b-re}
b(t) = (n-2)\eta(r) - \eta^2(r) + r\eta'(r).
\end{equation}
On the other hand, we know from \eqref{eq:psi} that 
\begin{equation*}
\log u(r,\theta) = \frac{2t}{\alpha-1} - \frac{\beta}{\alpha-1}\log t + \log \psi(t,\theta),
\end{equation*}
from which we may also deduce that 
\begin{equation}\label{eq:zeta-re}
\zeta(t,\theta) = \frac{\log u(r,\theta)}{\log\frac{1}{r}}.
\end{equation}
One may also notice from \eqref{eq:psi} that 
\begin{equation}\label{eq:psia}
r^2 \phi(r)u(r,\theta) = t^{-\beta} \psi^\alpha (t,\theta). 
\end{equation}
Hence, inserting \eqref{eq:a-re}, \eqref{eq:b-re}, \eqref{eq:zeta-re} and \eqref{eq:psia}, we arrive at the equation \eqref{eq:psi-pde}, which finishes the proof.
\end{proof} 

Let us define 
\begin{equation}\label{eq:psib}
\bar\psi (t) = \fint_{\Ss^{n-1}} \psi (t,\theta) d\theta  = r^{\frac{2}{\alpha-1}} \left( \log \frac{1}{r} \right)^{\frac{\beta}{\alpha-1}} \bar{u}(r),
\end{equation}
and 
\begin{equation}\label{eq:zetab}
\bar\zeta (t) = \frac{2}{\alpha-1} - \frac{\beta}{\alpha-1}\frac{\log t}{t} + \frac{\log \bar\psi(t)}{t}.
\end{equation}
Averaging \eqref{eq:psi-pde} over $\Ss^{n-1}$, we obtain 
\begin{equation}\label{eq:psib-ode}
\bar\psi'' + a\bar\psi' - b\bar\psi + \bar\zeta^\beta \bar\psi^\alpha + \fint_{\Ss^{n-1}} (\psi^\alpha\zeta^\beta - \bar\psi^\alpha\bar\zeta^\beta)d\theta =0,
\end{equation}
for large $t$.

\begin{lemma}\label{lemma:psi-psib} We have 
\begin{align}\label{eq:psi-psib}
&\psi(t,\theta) - \bar\psi(t) = \bar\psi(t) O(e^{-t}),\\
\label{eq:psi'-psib'}
&\left| \frac{\p}{\p t} (\psi(t,\theta) - \bar\psi(t)) \right| + \left| \nabla_\theta ( \psi(t,\theta) - \bar\psi(t,\theta) ) \right| = \bar\psi(t) O (e^{-t}),\\
\label{eq:psib-C01}
&\bar\psi(t)  = O(1)\quad\text{and}\quad \bar\psi'(t) = O(1),
\end{align}
as $t\ra\infty$. 
\end{lemma}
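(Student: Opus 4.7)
My approach handles the three assertions in order. The first two follow from Theorem \ref{theorem:cgs} and Lemma \ref{lemma:upper} applied through the change of variables \eqref{eq:psi}; the last requires interior elliptic regularity applied to the PDE \eqref{eq:psi-pde} on cylindrical slabs.

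For \eqref{eq:psi-psib}, the identities $\psi(t,\theta) = \phi(r)u(r,\theta)$ and $\bar\psi(t) = \phi(r)\bar u(r)$ with $r=e^{-t}$ and $\phi(r) = r^{2/(\alpha-1)}(\log 1/r)^{\beta/(\alpha-1)}$ give
\[
\psi(t,\theta) - \bar\psi(t) = \phi(r)\bigl(u(r,\theta) - \bar u(r)\bigr) = \phi(r)\bar u(r) O(r) = \bar\psi(t) O(e^{-t})
\]
by \eqref{eq:cgs}. For \eqref{eq:psib-C01}, the same identity combined with \eqref{eq:ub-upper} gives $\bar\psi = O(1)$ at once. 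Differentiating $\bar\psi(t) = \phi(e^{-t})\bar u(e^{-t})$ and using $r\phi'(r) = \eta(r)\phi(r)$ from \eqref{eq:eta} yields
\[
\bar\psi'(t) = -\eta(r)\bar\psi(t) - r\phi(r)\bar u'(r).
\]
The first summand is $O(1)$ because $\eta(r)$ is bounded as $r\to 0$ and $\bar\psi = O(1)$; the second is $O(1)$ because the powers of $r$ and $\log(1/r)$ in $r\phi(r)$ exactly cancel those in the bound \eqref{eq:ub'-upper} on $\bar u'(r)$.

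The gradient estimate \eqref{eq:psi'-psib'} is the substantial step. Subtracting \eqref{eq:psib-ode} from \eqref{eq:psi-pde} shows that $w := \psi - \bar\psi$ satisfies the linear elliptic equation
\[
w_{tt} + \Delta_\theta w + a(t) w_t - b(t) w = G(t,\theta), \qquad G(t,\theta) := \fint_{\Ss^{n-1}}\zeta^\beta\psi^\alpha\, d\theta' - \zeta^\beta\psi^\alpha,
\]
on the cylinder $(t_0,\infty)\times\Ss^{n-1}$, with coefficients smooth and uniformly bounded as $t\to\infty$ by \eqref{eq:a}--\eqref{eq:b}. The bound \eqref{eq:psi-psib} gives $\psi/\bar\psi = 1 + O(e^{-t})$, hence $\zeta - \bar\zeta = t^{-1}\log(\psi/\bar\psi) = O(e^{-t})$; a mean-value estimate for the map $(\zeta,\psi) \mapsto \zeta^\beta\psi^\alpha$ then produces $G = \bar\psi(t) O(e^{-t})$. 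Applying standard interior $W^{2,p}$ estimates for $p > n$ on the slab $[T-1,T+1]\times\Ss^{n-1}$, combined with Sobolev embedding, yields
\[
\|w\|_{C^1([T-\tfrac12, T+\tfrac12]\times\Ss^{n-1})} \leq C\bigl(\|w\|_{L^\infty} + \|G\|_{L^\infty}\bigr) = \bar\psi(T) O(e^{-T}),
\]
which proves \eqref{eq:psi'-psib'} in view of $\nabla_\theta \bar\psi = 0$.

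The main technical hurdle is making the mean-value estimate on $\zeta^\beta\psi^\alpha$ precise. The factor $\psi^\alpha - \bar\psi^\alpha$ is handled by $|\psi^\alpha - \bar\psi^\alpha| \leq C\bar\psi^{\alpha-1}|\psi - \bar\psi| = \bar\psi^\alpha O(e^{-t})$, which is absorbed into $\bar\psi\, O(e^{-t})$ since $\alpha > 1$ and $\bar\psi = O(1)$. For $\zeta^\beta - \bar\zeta^\beta$, however, one needs $\zeta$ bounded away from $0$ when $\beta < 1$, which amounts to a lower bound on $\log u / t$. In the non-removable case this follows from Bôcher's theorem applied to the superharmonic function $u$, giving $u(x) \gtrsim |x|^{-(n-2)}$ and hence $\zeta \geq (n-2)/2$ for all sufficiently large $t$. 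Once this lower bound is in hand, all the estimates fall into place.
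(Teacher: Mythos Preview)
Your treatment of \eqref{eq:psi-psib} and \eqref{eq:psib-C01} is correct and matches the paper. For \eqref{eq:psi'-psib'} you take a different route: the paper works in the original $x$-variables, writing $-\Delta(u-\bar u)=\bar u^\alpha|\log\bar u|^\beta\,O(r)$, applying interior gradient estimates on the annulus $B_{2r}\setminus\overline{B_{r/2}}$, and then using the Harnack inequality (valid because $u^{\alpha-1}|\log u|^\beta=O(|x|^{-2})$ by \eqref{eq:upper2}) to compare values of $u$ across the annulus. Your cylinder approach via $W^{2,p}$ estimates for $w=\psi-\bar\psi$ is a legitimate alternative, but the bound you give on the forcing term $G$ contains a genuine error.

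The B\^ocher claim is wrong: you assert $u(x)\gtrsim |x|^{-(n-2)}$ in the non-removable case, but this contradicts the upper bound \eqref{eq:ub-upper} together with \eqref{eq:cgs}, since $\alpha>\tfrac{n}{n-2}$ forces $\tfrac{2}{\alpha-1}<n-2$ and hence $u$ grows strictly slower than the fundamental solution. Thus you have no lower bound on $\zeta$ at this stage of the argument, and for $\beta<1$ the mean-value step on $\zeta\mapsto\zeta^\beta$ fails. The repair is to bound $G$ without passing through $\zeta$: since $\zeta^\beta\psi^\alpha=r^2\phi(r)\,u^\alpha(\log u)^\beta$, the estimate $u^\alpha(\log u)^\beta=\bar u^\alpha(\log\bar u)^\beta(1+O(r))$ (from \eqref{eq:cgs} and $\log\bar u\to\infty$) combined with \eqref{eq:upper2} gives $G=\bar\psi\,O(e^{-t})$ directly --- this is essentially the paper's computation transplanted to the cylinder. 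Note also that converting your pointwise bounds into $L^\infty$ bounds on the slab $[T-1,T+1]\times\Ss^{n-1}$ requires $\bar\psi(t)\sim\bar\psi(T)$ there, which again comes from the Harnack inequality that the paper invokes but you do not mention.
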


\begin{proof} In this proof, $C>0$ will depend on $n$ only and may differ from one line to another. The estimates in \eqref{eq:psib-C01} follow immediately from \eqref{eq:psib}, \eqref{eq:ub-upper} and \eqref{eq:ub'-upper}. Moreover, since $\psi(t,\theta) - \bar\psi(t) = r^{-\frac{2}{\alpha-1}} (\log\frac{1}{r})^{-\frac{\beta}{\alpha-1}} (u(r,\theta) - \bar{u}(r))$, \eqref{eq:psi-psib} can be easily deduced from \eqref{eq:cgs}. Thus, we are only left with proving  \eqref{eq:psi'-psib'}. 

For the notational convenience, let  $\bar{u}(x)$ be the $\bar{u}(|x|)$. Also let us denote by $A_r$ the annulus $B_{2r} \setminus \bar{B}_{\frac{r}{2}}$. From \eqref{eq:cgs}, we have 
\begin{equation*}
-\Delta (u - \bar{u}) = \bar{u}^\alpha |\log \bar{u}|^\beta O(r)\quad\text{in $A_r$ as $r\ra 0$}.
\end{equation*}
Therefore, it follows from the interior gradient estimates that 
\begin{equation*}
\begin{split}
|\nabla (u - \bar{u})| &\leq C\left( \frac{1}{r} \norm{u - \bar{u}}_{L^\infty(A_r)} + r^2 \norm{\bar{u}^\alpha |\log \bar{u}|^\beta}_{L^\infty(A_r)}  \right)\\
& \leq C\left( \norm{\bar{u}}_{L^\infty(A_r)} + r^2 \norm{\bar{u}^\alpha |\log \bar{u}|^\beta}_{L^\infty(A_r)} \right)\quad\text{on }\p B_r.
\end{split}
\end{equation*}

We regard \eqref{eq:main} as $-\Delta u=m(x) u$ in $B_1\setminus\{0\}$, where $m=u^{\alpha-1}|\log u|^\beta$. In view of \eqref{eq:upper2} and \eqref{eq:cgs} we have that $m(x)=O(|x|^{-2})$ so the Harnack inequality implies 
$$
\sup_{A_r} u \leq C\inf_{A_r} u.
$$ 
Using this observation along with \eqref{eq:cgs}, \eqref{eq:upper2} and the above gradient estimate, we find 
\begin{equation}\label{eq:u-ub}
|\nabla (u-\bar{u})| \leq C(\bar{u} + r^2 \bar{u}^\alpha |\log \bar{u}|^\beta ) \leq C\bar{u}\quad\text{on }\p B_r.
\end{equation}
Since $\psi(t,\theta) - \bar\psi(t) = r^{\frac{2}{\alpha-1}} (\log\frac{1}{r})^{\frac{\beta}{\alpha-1}} (u(r,\theta) - \bar{u}(r))$, \eqref{eq:psi'-psib'} follows from \eqref{eq:u-ub}, \eqref{eq:ub-upper} and \eqref{eq:cgs}. 
\end{proof}

\begin{corollary}\label{corollary:psi-psib} We have 
\begin{equation}\label{eq:psi-psib-int}
\fint_{\Ss^{n-1}} \left(\psi^\alpha (t,\theta) \zeta^\beta (t,\theta) - \bar\psi^\alpha(t) \bar\zeta^\beta (t) \right) d\theta = O(e^{-t})\quad\text{as }t\ra\infty.
\end{equation}
\end{corollary}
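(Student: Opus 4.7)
The plan is to unfold the definitions. From \eqref{eq:psi} and \eqref{eq:zeta-re}, a direct computation gives
\[
\psi^\alpha(t,\theta)\zeta^\beta(t,\theta) = r^{\frac{2\alpha}{\alpha-1}}\Bigl(\log \tfrac{1}{r}\Bigr)^{\frac{\beta}{\alpha-1}} u^\alpha(r,\theta)(\log u(r,\theta))^\beta,
\]
and analogously for $\bar{\psi}^\alpha\bar{\zeta}^\beta$ with $u$ and $\log u$ replaced by $\bar{u}$ and $\log\bar{u}$. Factoring out the common prefactor, the corollary reduces to the pointwise estimate
\[
u^\alpha(\log u)^\beta - \bar{u}^\alpha(\log\bar{u})^\beta = O\!\left(r\cdot r^{-2-\frac{2}{\alpha-1}}\Bigl(\log\tfrac{1}{r}\Bigr)^{-\frac{\beta}{\alpha-1}}\right)
\]
uniformly in $\theta\in\Ss^{n-1}$, which would then yield the desired $O(r)=O(e^{-t})$ for the integrand, a bound preserved under the spherical average.

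\textbf{Key estimates.} To obtain the pointwise bound I would invoke \eqref{eq:cgs} to write $u = \bar{u}(1+O(r))$, so that $\log u = \log \bar{u} + O(r)$; in the singular regime $\bar{u}\to\infty$, hence $\log\bar{u}$ is eventually large and positive. First-order Taylor expansions of $x\mapsto x^\alpha$ at $\bar{u}$ and of $y\mapsto y^\beta$ at $\log\bar{u}$ give $u^\alpha = \bar{u}^\alpha(1+O(r))$ and $(\log u)^\beta = (\log\bar{u})^\beta(1+O(r/\log\bar{u}))$; multiplying these yields $u^\alpha(\log u)^\beta - \bar{u}^\alpha(\log\bar{u})^\beta = O\bigl(r\,\bar{u}^\alpha(\log\bar{u})^\beta\bigr)$. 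Writing $\bar{u}^\alpha(\log\bar{u})^\beta = \bar{u}\cdot\bar{u}^{\alpha-1}(\log\bar{u})^\beta$, the upper bound \eqref{eq:upper2} controls the second factor by $O(r^{-2})$, while \eqref{eq:ub-upper} controls $\bar{u}$ itself; combining them produces exactly the right-hand side above, and a routine bookkeeping of exponents of $r$ and $\log\tfrac{1}{r}$ closes the argument.

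\textbf{Main obstacle.} The only delicate point is the Taylor expansion of $y\mapsto y^\beta$ at $y=\log\bar{u}$, which requires $\log u$ and $\log\bar{u}$ to be strictly positive and uniformly comparable as $r\to 0$. This is automatic in the singular regime because $\bar{u}\to\infty$ (as established in the proof of Lemma~\ref{lemma:upper}), so the constant hidden in $O(r/\log\bar{u})$ remains controlled; the corresponding expansion for $x\mapsto x^\alpha$ is unproblematic since $\alpha>1$ is fixed and $u/\bar{u}\to 1$. One could alternatively work in the $(\psi,\zeta)$ variables directly using Lemma~\ref{lemma:psi-psib}, but this would require a lower bound on $\bar{\zeta}$ (an issue when $\beta<0$) that is avoided by the present ``unfolded'' argument.
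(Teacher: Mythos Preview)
Your argument is correct and complete in the singular regime (which is the standing hypothesis of this section): unfolding $\psi^\alpha\zeta^\beta$ to $r^{2\alpha/(\alpha-1)}(\log\tfrac{1}{r})^{\beta/(\alpha-1)}u^\alpha(\log u)^\beta$ via \eqref{eq:zeta-re}, Taylor-expanding $g(u)=u^\alpha(\log u)^\beta$ around $\bar u$ using \eqref{eq:cgs}, and then invoking \eqref{eq:upper2} together with \eqref{eq:ub-upper} to size $\bar u^\alpha(\log\bar u)^\beta$ gives exactly the required $O(e^{-t})$.

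The paper proceeds differently in organisation, though the underlying idea is the same Taylor expansion. It stays in the $(\psi,\zeta)$ variables: from Lemma~\ref{lemma:psi-psib} it first records $\zeta-\bar\zeta=\tfrac{1}{t}\log(\psi/\bar\psi)=O(e^{-t}/t)$, then splits $\psi^\alpha\zeta^\beta-\bar\psi^\alpha\bar\zeta^\beta=(\psi^\alpha-\bar\psi^\alpha)\zeta^\beta+\bar\psi^\alpha(\zeta^\beta-\bar\zeta^\beta)$ and bounds each piece by $(\psi-\bar\psi)O(1)+(\zeta-\bar\zeta)O(1)$. Your observation about the lower bound on $\bar\zeta$ is apt: when $\beta<0$ the paper's ``$O(1)$'' for $\zeta^\beta$ and for the divided difference of $y\mapsto y^\beta$ tacitly requires $\zeta,\bar\zeta$ to stay away from zero, which is not immediate if $\bar\psi\to 0$. (It can be rescued by noting $\bar\psi^\alpha\bar\zeta^\beta=O(1)$, which is precisely \eqref{eq:upper2} rewritten, and that $\zeta/\bar\zeta=1+O(e^{-t}/\log\bar u)\to 1$.) Your route packages exactly that information through \eqref{eq:upper2} from the outset, so the delicate case is handled without a separate lower-bound discussion. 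Conversely, the paper's coordinates align with the ODE analysis that follows in Lemma~\ref{lemma:psib-lim}, so staying in $(\psi,\zeta)$ is notationally economical there. Either way, the substance is the same mean-value estimate for $g(u)-g(\bar u)$.
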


\begin{proof}
From \eqref{eq:zeta} and \eqref{eq:zetab} we know that
\begin{equation*}
\zeta(t,\theta) - \bar\zeta(t) = \frac{1}{t} \log \frac{\psi(t,\theta)}{\bar\psi(t)},
\end{equation*}
for any $t>0$ and $\theta\in\Ss^{n-1}$. Due to \eqref{eq:psi-psib}, we have 
\begin{equation*}
\zeta(t,\theta) - \bar\zeta(t) = \frac{1}{t}\log ( 1 + O (e^{-t})) = O\left(\frac{e^{-t}}{t}\right) \quad\text{as }t\ra\infty.
\end{equation*}
Using the above estimate together with Lemma \ref{lemma:psi-psib} we have
$$
\begin{aligned}
\psi^\alpha (t,\theta) \zeta^\beta (t,\theta) - \bar\psi^\alpha(t) \bar\zeta^\beta (t) &=(\psi^\alpha (t,\theta) -\bar\psi^\alpha(t))\zeta^\beta (t,\theta) +\bar\psi^\alpha (t)(\zeta^\beta (t,\theta)- \bar\zeta^\beta (t))\\
&=(\psi (t,\theta) -\bar\psi(t))O(1) +(\zeta(t,\theta)- \bar\zeta (t))O(1)\\
&=O(e^{-t})\quad\mbox{ as }t\to \infty.
\end{aligned}
$$
An integration over $\Ss^{n-1}$ in the above estimate will next lead us to \eqref{eq:psi-psib-int}.
\end{proof}

\begin{lemma}\label{lemma:psib-lim} We have either
\begin{equation}\label{eq:psib-lim-0}
\lim_{t\ra\infty} \bar\psi(t) = 0,
\end{equation}
or 
\begin{equation}\label{eq:psib-lim-A}
\lim_{t\ra\infty} \bar\psi(t) = A,
\end{equation}
with $A$ given by \eqref{const}. 
\end{lemma}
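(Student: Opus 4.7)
The plan is to treat \eqref{eq:psib-ode} as an asymptotically autonomous, dissipative second-order ODE whose formal limit as $t\to\infty$ is
\begin{equation*}
\Psi''+a_\infty\Psi'-b_\infty\Psi+c_\infty\Psi^\alpha=0,
\end{equation*}
with $a_\infty=\tfrac{4}{\alpha-1}-n+2$, $b_\infty=(n-2-\tfrac{2}{\alpha-1})\tfrac{2}{\alpha-1}$, $c_\infty=(\tfrac{2}{\alpha-1})^\beta$. Under \eqref{eq:alpha-m} both $a_\infty$ and $b_\infty$ are strictly positive, and the only nonnegative equilibria are $\Psi\equiv 0$ and $\Psi\equiv A$, with $A$ as in \eqref{const}, since $A^{\alpha-1}=b_\infty/c_\infty$. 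The goal is to show $\bar\psi(t)$ must approach one of these two values.

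The key device is the Lyapunov functional
\begin{equation*}
E(t)=\tfrac12(\bar\psi'(t))^2+V(\bar\psi(t)),\qquad V(\psi)=-\tfrac{b_\infty}{2}\psi^2+\tfrac{c_\infty}{\alpha+1}\psi^{\alpha+1},
\end{equation*}
whose potential $V$ satisfies $V'(0)=V'(A)=0$, is strictly monotone on each of $[0,A]$ and $[A,\infty)$, and has a strict local minimum at $A$. Differentiating along \eqref{eq:psib-ode} and rearranging,
\begin{equation*}
\dot E(t)=-a(t)(\bar\psi'(t))^2+(b(t)-b_\infty)\bar\psi\bar\psi'+(c_\infty-\bar\zeta^\beta)\bar\psi^\alpha\bar\psi'-R(t)\bar\psi',
\end{equation*}
where $R(t)=\fint_{\Ss^{n-1}}(\psi^\alpha\zeta^\beta-\bar\psi^\alpha\bar\zeta^\beta)\,d\theta=O(e^{-t})$ by Corollary \ref{corollary:psi-psib}. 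From \eqref{eq:b} we have $b-b_\infty=O(1/t)$, and \eqref{eq:zetab} combined with the boundedness of $\bar\psi$ from Lemma \ref{lemma:psi-psib} gives $\bar\zeta=\tfrac{2}{\alpha-1}+O((\log t)/t)$ whenever $\bar\psi$ is not extremely small, so $c_\infty-\bar\zeta^\beta=O((\log t)/t)$ there. Applying Young's inequality to absorb the cross terms into $\tfrac{a_\infty}{2}(\bar\psi')^2$ then yields
\begin{equation*}
\dot E(t)\le-\tfrac{a_\infty}{4}(\bar\psi'(t))^2+g(t),\qquad g\in L^1(t_0,\infty),
\end{equation*}
because $(\log t)^2/t^2$, $1/t^2$ and $e^{-2t}$ are all integrable at infinity.

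From this the function $E-\int_{t_0}^{\cdot}g\,ds$ is nonincreasing and bounded below, hence $E(t)\to E_\infty$ and $\int_{t_0}^\infty(\bar\psi')^2\,dt<\infty$. Since \eqref{eq:psib-ode} forces $\bar\psi''$ to be bounded, $\bar\psi'$ is uniformly continuous, and together with the integrability of $(\bar\psi')^2$ this gives $\bar\psi'(t)\to 0$. Consequently $V(\bar\psi(t))\to E_\infty$, and because $V$ is a polynomial on $[0,\infty)$ the level set $V^{-1}(E_\infty)\cap[0,\infty)$ is finite; since the $\omega$-limit set of $\bar\psi$ is connected it reduces to a single point $\psi_*\ge 0$, so $\bar\psi(t)\to\psi_*$. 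If $\psi_*=0$ we obtain \eqref{eq:psib-lim-0}. Otherwise $\psi_*>0$ implies $\bar\zeta(t)\to\tfrac{2}{\alpha-1}$, and evaluating \eqref{eq:psib-ode} asymptotically with $\bar\psi'\to0$ and $R\to0$ yields $\bar\psi''(t)\to b_\infty\psi_*-c_\infty\psi_*^\alpha$; a nonzero limit of $\bar\psi''$ is incompatible with $\bar\psi'\to0$, so $b_\infty\psi_*=c_\infty\psi_*^\alpha$, forcing $\psi_*=A$.

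The principal obstacle I anticipate is making the bound $c_\infty-\bar\zeta^\beta=O((\log t)/t)$ uniform on time intervals where $\bar\psi(t)$ dips close to zero, since then $\log\bar\psi/t$ in \eqref{eq:zetab} need not be small and $\bar\zeta^\beta$ may even blow up when $\beta<0$. The saving feature is that $\bar\zeta=\log\bar u(r)/\log(1/r)>0$ for any singular $u$, and if $\bar\psi$ is so small that $\log\bar\psi/t$ ceases to be $o(1)$ then $\bar\psi^\alpha$ is exponentially small in $t$, which easily absorbs any polynomial growth of $\bar\zeta^\beta$. Accordingly I would split into regions $\{\bar\psi\ge\delta(t)\}$ and $\{\bar\psi<\delta(t)\}$ with, say, $\delta(t)=t^{-K}$ for large $K$, and estimate $\dot E$ separately on each so that the integrable remainder $g(t)$ still dominates the perturbation throughout.
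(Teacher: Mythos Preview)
Your argument is correct and follows a route that is close in spirit to, but cleaner than, the paper's. Both proofs are energy/dissipation arguments built on the averaged ODE \eqref{eq:psib-ode}, but they organize the bookkeeping differently. The paper multiplies \eqref{eq:psib-ode} first by $\bar\psi'$ and integrates (handling the $b$- and $\bar\zeta^\beta$-terms by integration by parts with the \emph{time-dependent} coefficients) to get $\int(\bar\psi')^2<\infty$ and then $\bar\psi'\to0$; it then repeats the procedure multiplying by $\bar\psi''$ to obtain $\int(\bar\psi'')^2<\infty$ and $\bar\psi''\to0$, and finally passes to subsequential limits in \eqref{eq:psib-ode} together with an intermediate-value argument to force $\bar\psi_0\in\{0,A\}$. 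You instead freeze the coefficients at their limits, introduce the Lyapunov functional $E=\tfrac12(\bar\psi')^2+V(\bar\psi)$, and push all time-dependence into an $L^1$ remainder via Young's inequality. This gives $E(t)\to E_\infty$ and $\bar\psi'\to0$ in one stroke, and then the monotonicity of $V$ on $[0,A]$ and on $[A,\infty)$ (your ``finite level set'' step) yields full convergence of $\bar\psi$ without the second energy estimate; the equilibrium identity then comes from the incompatibility of $\bar\psi''\to c\neq0$ with $\bar\psi'\to0$. Your approach is more conceptual and avoids the $\bar\psi''$-energy step entirely; the paper's approach is slightly more elementary in that it never needs to split the perturbation via Young's inequality. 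One small slip: $V$ is not literally a polynomial for non-integer $\alpha$, but the monotonicity you already stated is what the connectedness argument actually uses. The difficulty you flag---that $\bar\zeta^\beta$ may misbehave where $\bar\psi$ is small---is genuine (the paper's claim $\bar\zeta'=O(t^{-2}\log t)$ is subject to the same caveat), and your proposed resolution is the right one: once one reduces to the singular case $\bar u\to\infty$, one has $\bar\zeta=\log\bar u/t>0$ with $\bar\zeta t\to\infty$, so any blow-up of $\bar\zeta^\beta$ is at worst polynomial in $t$ while $\bar\psi^\alpha=t^{\alpha\beta/(\alpha-1)}e^{\alpha t(\bar\zeta-2/(\alpha-1))}$ is exponentially small on $\{\bar\zeta\le\tfrac{2}{\alpha-1}-\delta\}$; on the complementary region $\bar\zeta$ is bounded away from zero and your $O((\log t)/t)$ estimate holds.
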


\begin{proof}
Let $\psi$ and $\bar\psi$ be defined by  \eqref{eq:psi} and \eqref{eq:psib}  respectively. Multiplying \eqref{eq:psib-ode} by $\bar\psi'$ and integrating it over $[t,T]$,  from \eqref{eq:psi-psib} -- \eqref{eq:psib-C01} and \eqref{eq:psi-psib-int} we find
\begin{equation}\label{eq:psib'-ode}
\frac{1}{2}\left[\bar\psi'^2\right]_t^T + \int_t^T a\bar\psi'^2 ds - \frac{1}{2}\int_t^T  b(\bar\psi^2)' ds + \frac{1}{\alpha+1}\int_t^T \bar\zeta^\beta (\bar\psi^{\alpha+1})'ds + O(e^{-t}) =0.
\end{equation}

By \eqref{eq:b} and  \eqref{eq:psib-C01} we have $b(t) = O(1)$ and $b'(t) = O(t^{-2})$, which leads us to 
\begin{equation}\label{eq:b-psib}
\int_t^T b(\bar\psi^2)' ds = \left[ b\bar\psi^2 \right]_t^T - \int_t^T b'\bar\psi^2 ds = O\left( 1 + \int_t^T \frac{ds}{s^2}\right) = O(1).
\end{equation}
Similarly, from \eqref{eq:zeta} and \eqref{eq:psib-C01} we find $\bar\zeta(t)=O(1)$ and $\bar\zeta'(t) = O(t^{-2}\log t)$, from which it follows that  
\begin{equation}\label{eq:zetab-psib}
\int_t^T \bar\zeta^\beta (\bar\psi^{\alpha+1})' ds = O\left( 1 + \int_t^T \frac{\log s}{s^2}ds\right) = O(1).
\end{equation}
Since $\alpha$ is chosen as in \eqref{eq:alpha-m}, we know from \eqref{eq:a} that $a(t)$ is positive and bounded away from zero for all large $t>1$. Thus, \eqref{eq:b-psib}, \eqref{eq:zetab-psib} and \eqref{eq:psib-ode} yield
\begin{equation}\label{eq:psib'-int}
\int_t^T \bar\psi'^2 ds = O(1).
\end{equation}
In view of \eqref{eq:psib-ode}, it follows from \eqref{eq:psib-C01} and \eqref{eq:psi-psib-int} that $\bar\psi''(t) = O(1)$ and hence $\bar\psi'^2(t)$ is uniformly Lipschitz for large $t>1$. Hence, we deduce that 
\begin{equation}\label{eq:psib'-lim}
\lim_{t\ra\infty} \bar\psi'(t) = 0.
\end{equation}

Now we multiply \eqref{eq:psib-ode} by $\bar\psi''$ and integrate it over $[t,T]$, which leads us to 
\begin{equation}\label{eq:psib''-ode}
\int_t^T (\bar\psi'')^2 ds + \frac{1}{2} \int_t^T a (\bar\psi'^2)' ds - \int_t^T b \bar\psi \bar\psi'' ds + \int_t^T \bar\zeta^\beta \bar\psi^\alpha \bar\psi'' ds + O(e^{-t}) = 0, 
\end{equation}
due to \eqref{eq:psi-psib} -- \eqref{eq:psib-C01} and \eqref{eq:psi-psib-int} as before. Note from \eqref{eq:a} that we have  $a(t)= O(1)$ and $a'(t) = O(t^{-2})$ as $t\to \infty$. Hence, from \eqref{eq:psib-C01} and \eqref{eq:psib'-int} we derive
\begin{equation}\label{eq:a-psib}
\int_t^T a(\bar\psi'^2)' ds = \left[a \bar\psi'^2\right]_t^T - \int_t^T a' \bar\psi'^2 ds = O(1). 
\end{equation}
On the other hand, since $\bar\psi\bar\psi'' = \frac{1}{2} (\bar\psi^2)'' - \bar\psi'^2$, a further integration by parts produces
\begin{equation}\label{eq:b-psib2}
\int_t^T b\bar \psi \bar \psi'' ds= \frac{1}{2}[b(\bar\psi^2)']_t^T-\int_t^T \left( \frac{1}{2} b' (\bar \psi^2)'+ b  \bar\psi'^2 \right) ds = O(1),
\end{equation}
where the second equality can be deduced analogously to the derivation of \eqref{eq:b-psib}. Similarly, we also observe that
\begin{equation}\label{eq:zeta-psib2}
\int_t^T \bar\zeta^\beta \bar\psi^\alpha \bar\psi'' ds = O(1). 
\end{equation} 
Due to \eqref{eq:a-psib}, \eqref{eq:b-psib2} and \eqref{eq:zeta-psib2}, \eqref{eq:psib''-ode} leads us to
\begin{equation}\label{eq:psib''-int}
\int_t^T (\bar\psi'')^2 ds = O(1).
\end{equation}
Differentiating \eqref{eq:psib-ode} with respect to $t$, we deduce from \eqref{eq:psib-C01}, \eqref{eq:psi-psib-int} and $\bar\psi''(t) = O(1)$ that $\bar\psi'''(t) = O(1)$. Therefore, $\bar\psi''^2(t)$ is uniformly Lipschitz for large $t>1$, from which combined with \eqref{eq:psib''-int} we obtain 
\begin{equation}\label{eq:psib''-lim}
\lim_{t\ra\infty} \bar\psi''(t) = 0.
\end{equation}

To this end, we shall pass to the limit in \eqref{eq:psib-ode} with $t\ra \infty$. Note that we have from \eqref{eq:b}
\begin{equation*}
\lim_{t\ra\infty} b(t) = \frac{2}{\alpha-1}\left(n-2-\frac{2}{\alpha-1}\right),
\end{equation*}
while it follows from \eqref{eq:zetab} and \eqref{eq:psib-C01} that 
\begin{equation*}
\lim_{t\ra\infty} \zeta(t) = \frac{2}{\alpha-1}.
\end{equation*} 
Although we do not know yet if $\bar\psi(t)$ converges as $t\ra\infty$, we still know from \eqref{eq:psib-C01} that it converges along a subsequence. Denoting by $\bar\psi_0$ a limit value of $\bar\psi(t)$ along a subsequence, say $t=t_j\ra\infty$, after passing to the limit in \eqref{eq:psib-ode} with $t=t_j$, we obtain from \eqref{eq:psi-psib-int}, \eqref{eq:psib'-lim} and \eqref{eq:psib''-lim} that
\begin{equation*}
\frac{2}{\alpha-1}\left(n-2-\frac{2}{\alpha-1}\right) \bar\psi_0 - \left(\frac{2}{\alpha-1}\right)^\beta \bar\psi_0^\alpha = 0. 
\end{equation*}
Thus, in view of \eqref{const}, we have 
\begin{equation}\label{eq:psib-lim}
\bar\psi_0=0\quad\mbox{or}\quad\bar\psi_0 = A. 
\end{equation}

Now the continuity of $\bar\psi$ implies that $\bar\psi(t)$ converges as $t\ra\infty$ (without extracting any subsequence) either to $0$ or $A$. If there are two distinct sequences $t_j\ra\infty$ and $t_j'\ra\infty$ such that $\bar\psi(t_j)\ra 0$ and $\bar\psi(t_j') \ra A$, then by the intermediate value theorem, there must exist some other $t_j''\ra\infty$ such that $\bar\psi(t_j'')\ra \frac{A}{2}$, which violates \eqref{eq:psib-lim}. Thus the proof is finished.  
\end{proof}

We are now in position to prove Theorem \ref{th1}.

\begin{proof}[Proof of Theorem \ref{th1}]
If \eqref{eq:psib-lim-A} is true, then in view of \eqref{eq:psib} we observe that
\begin{equation*}
\bar{u}(r)  = A(1+o(1)) r^{-\frac{2}{\alpha-1}} \left( \log \frac{1}{r} \right)^{-\frac{\beta}{\alpha-1}}\quad\text{as }r\ra 0.
\end{equation*}
Hence,  from \eqref{eq:cgs} we derive \eqref{main} and \eqref{const}, which establishes the proof for Theorem \ref{thm1} (ii).

Henceforth, let us suppose that 
\begin{equation}
\lim_{t\ra\infty} \bar\psi(t) = 0.
\end{equation}
The rest of the argument follows closely to the proof in \cite[Theorem 1.3]{CGS1989}. 

In view of \eqref{eq:a} and \eqref{eq:b}, we may rephrase \eqref{eq:psib-ode} as 
\begin{equation*}
\bar\psi'' + (a_0 + o(1))\bar\psi' - (b_0 + o(1))\bar\psi + \bar\zeta^\beta\bar\psi^\alpha + \fint_{\Ss^{n-1}} (\zeta^\beta \psi^\alpha - \bar\zeta^\beta \bar\psi^\alpha )d\theta = 0,
\end{equation*}
with 
\begin{equation*}
a_0 = \frac{4}{\alpha-1} - n + 2\quad\text{and}\quad b_0 = \frac{2}{\alpha-1} \left( n - 2 - \frac{2}{\alpha-1} \right).
\end{equation*}
Thus, the decay of $\bar\psi(t)$ is determined by the negative root of
\begin{equation*}
\lambda^2 + a_0 \lambda - b_0 = 0.
\end{equation*}
Since $a_0^2 + 4b_0 = (n-2)^2$, the root $\lambda$ is 
\begin{equation*}
\lambda = -\frac{1}{2} \left( a_0 + \sqrt{ a_0^2 + 4b_0 } \right) = - \frac{2}{\alpha-1}. 
\end{equation*}
Therefore, we have  
\begin{equation*}
\bar\psi (t) = O \left(e^{-\frac{2t}{\alpha-1}} \right)\quad\text{as }t\ra\infty.
\end{equation*}
In view of \eqref{eq:psib}, we obtain 
\begin{equation}\label{eq:ub-re}
\bar{u}(r) = \left(\log \frac{1}{r}\right)^{-\frac{\beta}{\alpha-1}} O(1)\quad\text{as }r\ra 0. 
\end{equation}

Now if $\beta > 0$, we deduce from \eqref{eq:ub-re} that $\bar{u}(r) \ra 0$ as $r\ra 0$, from which combined with \eqref{eq:cgs} it follows that $u(x) \ra 0$ as $x\ra 0$. Hence, the origin is a removable singularity. Similarly, if $\beta = 0$, \eqref{eq:ub-re} implies that $\bar{u}(r) = O(1)$, and thus, the origin is again a removable singularity.

Hence, we are only left with the case $\beta<0$. Since $u(x) = ( 1 + O(r))\bar{u}(r)$, \eqref{eq:ub-re} implies that 
\begin{equation*}
\int_{B_1} u^q \,dx \leq C \int_{B_1} \left(\log \frac{1}{|x|}\right)^{-\frac{\beta q}{\alpha-1}}\,dx \leq C \int_0^1 r^{n-1} \left(\log \frac{1}{r}\right)^{-\frac{\beta q}{\alpha-1}} \,dr \leq C
\end{equation*}
for each $q\geq 1$, for some const $C>0$ depending on $n$, $\alpha$, $\beta$ and $q$. Therefore, $u^\alpha |\log u|^\beta \in L^p (B_1)$ for any $p\geq 1$, and in particular for $p>n$. This implies that $\Delta u \in L^p (B_1)$ for $p>n$, so $u\in C^{1,\alpha}(B_{1/2})$ for $\alpha = 1- n/p$, proving again that the origin is a removable singularity. Thus the proof of Theorem \ref{th1} (i) is completed.
\end{proof}

\noindent{\it Acknowledgement.} This work was initiated in June 2017 when M. Ghergu was visiting the Royal Institute of Technology (KTH)  in Stocholm. The invitation and hospitality of the Department of Mathematics in KTH in greatly acknowledged.  S. Kim has been supported by National Research Foundation of Korea (NRF) grant funded by the Korean government (NRF-2014-Fostering Core Leaders of the Future Basic Science Program). H. Shahgholian has been supported in part by Swedish Research Council.

\end{document}